\newcommand{\Z}{\mathbb{Z}}
\newcommand{\Q}{\mathbb{Q}}
\newtheorem{theorem}{Theorem}[section]
\newtheorem{lemma}[theorem]{Lemma}
\newtheorem{corollary}[theorem]{Corollary}
\theoremstyle{definition}
\title[Tight contact structures on hyperbolic Three-manifolds]{Tight contact structures on hyperbolic Three-manifolds}
\author{M. Firat Arikan}
\address{Dept. of Mathematics, Middle East Technical University, Ankara, TURKEY}
\email{farikan@metu.edu.tr}
\author{Merve Se\c{c}g\.in}
\address{Dept. of Mathematics, Uluda\u{g} University, Bursa, TURKEY}
\email{msecgin@uludag.edu.tr}
\subjclass[2000]{57R65, 58A05, 58D27}
\keywords{contact structure, tight, Stein fillable, open book, hyperbolic manifold}
\date{\today}
\begin{document}

\begin{abstract}
We show the existence of tight contact structures on infinitely many hyperbolic three-manifolds obtained via  Dehn surgeries along sections of hyperbolic surface bundles over circle.
\end{abstract}

\maketitle

%-----------------------------------------------------------------------------------------------
%-----------------------------------------------------------------------------------------------
%-----------------------------------------------------------------------------------------------
\section{Introduction} \label{sec:Introduction}
A \emph{contact three-manifold} is a pair $(M,\xi)$ where $M$ is a smooth $3-$manifold and $\xi\subset TM$ is a totally non-integrable $2$-plane field distribution on $M$. Here we always assume that $\xi$ is a \emph{co-oriented} \emph{positive} contact structure, that is, $\xi=\textrm{Ker}(\alpha)$ for a \emph{contact} $1-$form $\alpha$ satisfying $\alpha \wedge d\alpha> 0$ with respect to a pre-given orientation on $M$. A disk $D$ in a contact $3-$manifold $(M,\xi)$ is called \emph{overtwisted} if the boundary circle $\partial D$ is tangent to $\xi$ everywhere. A contact structure $\xi$ is called overtwisted if there is an \emph{overtwisted} disk in $(M,\xi)$, otherwise it is called \emph{tight}.  It is known that every closed oriented $3-$manifold admits an overtwisted contact structure (\cite{El}, \cite{Ma}). On the other hand, there are $3-$manifolds which do not admit a tight contact structure \cite{EH}.\\

There are some classification results on tight contact structures with respect to the geometric speciality of $3-$manifolds. Lisca and Stipsicz in \cite{LS} proved that a closed oriented Seifert fibered $3-$manifold admits a tight contact structure if and only if it is not gotten $(2q-1)-$surgery along the $(2,2q+1)$ torus knot in $S^3$ for $q\geq 1$. In two independent work (\cite{Col}, \cite{HKM}), they showed the existence of tight contact structures on toroidal $3-$manifolds. It is known that every irreducible $3-$manifold that is neither toroidal nor Seifert fibered is hyperbolic. Kaloti and Tosun in \cite{KT} find infinitely many hyperbolic rational homology spheres admitting tight contact structures. Etg\"u in \cite{Et} also explored that infinitely many hyperbolic $3-$manifolds that carry tight contact structures. His construction uses Dehn surgeries along sections of hyperbolic torus bundles over $S^1$. Here we'll follow similar ideas for surface bundles over $S^1$ with fiber genus at least two. \\%Our examples that are hyperbolic $3-$manifolds admitting tight contact structures are given in Figure \ref{fig:Contact_Structure_intro}. \\

Let $\Sigma_g$ be a closed connected orientable surface with genus $g$. In this paper assume that $g$ is always greater than 1. We will denote $MCG(\Sigma_g)$ by the \emph{mapping class group} of $\Sigma_g$, i.e, the group of isotopy classes of orientation preserving homeomorphisms of $\Sigma_g$. Let $t_a$ be the positive Dehn twist along a simple closed curve $a$.\\

Let $\phi \in MCG(\Sigma_g)$ be the mapping class representing the homeomorphism

\begin{equation} \label{eqn:homeomorphism}
t_{a_1}^mt_{a_2}\cdots t_{a_{2g}}t_{a_{2g+1}}^n
\end{equation}
where $a_i$'s are simple closed curves on $\Sigma_g$ as indicated in Figure \ref{fig:Fig_1}.

\begin{figure}[ht]
	\begin{center}
		\includegraphics [scale=.61]{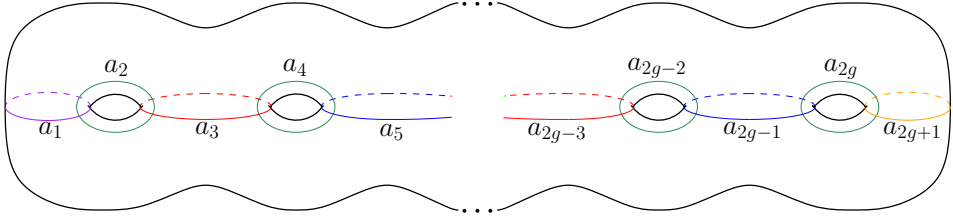}
		\caption{Simple closed curves on the surface $\Sigma_g$.}
		\label{fig:Fig_1}
	\end{center}
\end{figure}

Denote by $M_\phi$ the \emph{mapping torus} with fibers $\Sigma_g$ and monodromy $\phi$.  Let $M_\phi(r)$ be the surgered manifold obtained by performing rational $r$-surgery along a section of $M_\phi$. Clearly, $\phi$ has a fixed point, so such a section exists. The following theorems give examples required:

\begin{theorem} \label{thm:Main_Theorem1}
Suppose $g\geq2$, $m,n \in \Z$, $r \in \Q$ and $\phi$ as indicated in $(\ref{eqn:homeomorphism})$. Then
 $M_\phi(r)$ is hyperbolic for all but finitely many $m$ and $r$.

\end{theorem}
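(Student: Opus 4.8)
The plan is to present $M_\phi(r)$ as a Dehn filling of the exterior of the section and then invoke Thurston's hyperbolic Dehn surgery theorem. Let $K\subset M_\phi$ be the section produced by a fixed point of $\phi$, and let $N=M_\phi\setminus\mathrm{int}\,\nu(K)$ be its exterior, a compact $3$-manifold with a single torus boundary component; by construction $M_\phi(r)$ is obtained from $N$ by Dehn filling along a slope that ranges over a cofinite subset of $\Q$ as $r$ does. Hence, once one knows that $N$ is hyperbolic, Thurston's theorem gives that all but finitely many Dehn fillings of $N$ are hyperbolic, and in particular all but finitely many $M_\phi(r)$ are. So the whole statement reduces to the single claim: \emph{$N$ is hyperbolic for all but finitely many $m$} (with $g$ and $n$ fixed).

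To prove this I would use that $N$ is again a surface bundle. Removing a section from the $\Sigma_g$-bundle $M_\phi$ leaves a bundle over $S^1$ whose fiber is the once-punctured surface $\Sigma_{g,1}$ and whose monodromy is the lift $\widehat\phi\in MCG(\Sigma_{g,1})$ of the word $t_{a_1}^m t_{a_2}\cdots t_{a_{2g}}t_{a_{2g+1}}^n$; this lift is well defined because the curves $a_i$ of Figure~\ref{fig:Fig_1} all avoid a disk, so the chosen representative fixes that disk pointwise and in particular fixes the puncture. By Thurston's hyperbolization theorem for fibered $3$-manifolds, $N$ is hyperbolic as soon as $\widehat\phi$ is pseudo-Anosov; and a lift to a once-punctured surface of a pseudo-Anosov class on the closed surface is again pseudo-Anosov (such a lift is not periodic, since it maps to a pseudo-Anosov class, and not reducible, since every essential non-peripheral curve in $\Sigma_{g,1}$ has infinite orbit, as is immediate from the Birman exact sequence). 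Thus it suffices to show that $\phi\in MCG(\Sigma_g)$ itself is pseudo-Anosov for all but finitely many $m$.

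For this I would use that $a_1,\dots,a_{2g+1}$ form a chain that fills $\Sigma_g$ (its complement is two disks), and write $\phi=t_{a_1}^m\circ g$ with $g=t_{a_2}\cdots t_{a_{2g}}t_{a_{2g+1}}^n$. Fathi's theorem on compositions with high Dehn-twist powers shows that $t_{a_1}^m g$ is pseudo-Anosov for all but finitely many $m$, provided no power of $g$ fixes an essential curve disjoint from $a_1$; equivalently, provided the orbit $\bigcup_{j\in\Z}g^j(a_1)$ fills $\Sigma_g$. Establishing this is the heart of the matter. I would do it by tracking how the orbit propagates along the chain: since $a_1$ is disjoint from $a_3,\dots,a_{2g+1}$ and meets $a_2$ once, $g(a_1)=t_{a_2}(a_1)$, which is homologous to $a_1\pm a_2$ and so now meets $a_3$; applying $g$ again therefore involves a genuine $t_{a_3}$-move and produces a curve meeting $a_4$, and inductively $g^j(a_1)$ comes to meet every $a_i$. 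As the $a_i$ already fill $\Sigma_g$, the orbit fills and Fathi's criterion applies.

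The part I expect to be the real obstacle is exactly this last paragraph: pinning down the Nielsen--Thurston type of $\phi$ uniformly in $m$, i.e.\ verifying Fathi's hypothesis for the specific $g$ at hand (alternatively one might try to detect pseudo-Anosovness homologically, via the action of $\phi$ on $H_1(\Sigma_g)$ as a product of symplectic transvections, though this does not by itself exclude reductions by separating curves). The remaining ingredients --- the fibered description of $N$, Thurston's hyperbolization and Dehn surgery theorems, and the behaviour of lifts under the Birman sequence --- are standard; one should also record the routine points that the excluded set of $m$ is finite independently of $r$ (though it may depend on $n$ and $g$), and that the excluded slope $r=\infty$, which returns the hyperbolic manifold $M_\phi$, is consistent with the picture.
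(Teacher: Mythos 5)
Your overall route is the same as the paper's: reduce to showing $\phi$ is pseudo-Anosov for all but finitely many $m$ via Fathi's twist theorem, then invoke Thurston's hyperbolization of fibered $3$-manifolds and the hyperbolic Dehn surgery theorem. (Your treatment of the surgery step is in fact more careful than the paper's: you pass to the exterior $N$ of the section, identify it as a once-punctured $\Sigma_g$-bundle with pseudo-Anosov lifted monodromy, and fill; the paper applies the Dehn surgery theorem directly after noting that the closed bundle $M_\phi$ is hyperbolic, leaving the hyperbolicity of the complement of the section implicit.)

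However, the step you yourself flag as the heart of the matter contains a genuine gap. You take $g=t_{a_2}\cdots t_{a_{2g}}t_{a_{2g+1}}^n$ and argue that the orbit of $a_1$ under $g$ fills because each $g^j(a_1)$ eventually meets every $a_i$ and the $a_i$ fill $\Sigma_g$. That inference is invalid: a collection of curves can intersect every member of a filling system without itself filling (a single nonseparating curve meeting all the $a_i$ is already a counterexample to the implication), so nothing you wrote rules out the orbit filling only a proper subsurface. Your parenthetical ``equivalently, no power of $g$ fixes an essential curve disjoint from $a_1$'' is also not obviously the hypothesis of Fathi's theorem, which is precisely that the orbit fills. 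The paper sidesteps all of this by absorbing one copy of the twist into the second factor: write $\phi=t_{a_1}^{m-1}f$ with $f=t_{a_1}t_{a_2}\cdots t_{a_{2g}}t_{a_{2g+1}}^n$. Since $a_1$ is disjoint from $a_3,\dots,a_{2g+1}$ and $t_{a}t_{b}(a)=b$ whenever $\iota(a,b)=1$, one computes $f(a_1)=a_2$ and inductively $f^{i}(a_1)=a_{i+1}$ for $1\le i\le 2g-1$; hence the orbit of $a_1$ under $f$ contains the chain $a_1,\dots,a_{2g}$, whose complement is a disk, so the filling hypothesis is immediate and Fathi's theorem excludes at most $7$ consecutive values of $m-1$, hence of $m$. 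With that replacement (or with an honest argument that the orbit under your $g$ fills, e.g.\ by analyzing the $g$-invariant subsurface filled by the orbit), the rest of your proposal goes through.
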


\begin{theorem} \label{thm:Main_Theorem2}
Suppose $g\geq 1$, $r \in \Q$ and $\phi$ as indicated in $(\ref{eqn:homeomorphism})$. Then
$M_\phi(r)$ admits a tight contact structure $\xi$ for any  $m,n \in \Z^+$ and for all\, $r\neq 2g-1$.
\end{theorem}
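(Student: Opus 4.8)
The plan is to realize $M_\phi(r)$ as the result of a Legendrian surgery (i.e.\ a Stein-fillable contact surgery) on a contact manifold built from an open book. Recall that a mapping torus $M_\phi$ of $\Sigma_g$ with monodromy $\phi$ is itself canonically an open book once we drill out the section: more precisely, capping off the section gives a picture where $M_\phi$ is obtained by $0$-surgery (along the page framing) on the binding of an open book with page $\Sigma_{g,1}$ and monodromy $\phi$ (the monodromy $(\ref{eqn:homeomorphism})$ makes sense on the once-punctured surface). Since $\phi = t_{a_1}^m t_{a_2}\cdots t_{a_{2g}} t_{a_{2g+1}}^n$ is a product of \emph{positive} Dehn twists for $m,n\in\Z^+$, the open book $(\Sigma_{g,1},\phi)$ supports a Stein-fillable — in particular tight — contact structure $\xi_0$ on $M_\phi$ by the Loi--Piergallini / Giroux correspondence (a positive factorization gives a Lefschetz fibration filling). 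This is the starting contact manifold.

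Next I would identify the section of $M_\phi$ with a Legendrian (or more precisely, with a transverse/Legendrian-realizable) curve in $(M_\phi,\xi_0)$ whose contact framing is compatible with the page framing, and then interpret the $r$-surgery along the section as a sequence of contact $(-1)$-surgeries (Legendrian surgeries), possibly after stabilizing to adjust the framing to any prescribed rational $r$. The key classical input here is that any rational surgery with coefficient $r$ can be converted into a chain of contact $(\pm1)$-surgeries, and that a $(-1)$-surgery on a Legendrian knot in a Stein-fillable contact manifold yields again a Stein-fillable one (Eliashberg, Weinstein). The constraint $r \neq 2g-1$ should appear precisely as the condition guaranteeing that the surgery coefficient, measured against the page framing (which differs from the Seifert framing by an amount governed by the homology class of the section and the Euler number $2g-2$ of the bundle — hence the $2g-1$), stays in the range where the slam-dunk / rolfsen-twist conversion produces only negative contact surgery coefficients and no $(+1)$-surgeries are forced.

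The main obstacle I expect is the framing bookkeeping: one must pin down exactly how the surgery coefficient $r$ measured in the standard (Seifert) framing of the section translates into the framing relative to $\xi_0$ and to the page, and show that for every rational $r\neq 2g-1$ this translated coefficient admits a continued-fraction expansion realizable by Legendrian pushoffs and stabilizations using only contact $(-1)$-surgeries. The exceptional value $2g-1$ is the one slope for which the converted coefficient is $0$ or $+\infty$, so that no negative stabilization scheme works — which is consistent with the statement. A secondary subtlety is checking that the section can be made Legendrian with the needed Thurston--Bennequin-type invariant without destroying tightness; since the ambient structure is Stein-fillable we have enough room (we may always negatively stabilize the Legendrian representative, which only lowers the framing), so this should go through. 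Finally, one concludes tightness of the resulting $\xi$ on $M_\phi(r)$ from Stein fillability, which is preserved at each Legendrian surgery step.
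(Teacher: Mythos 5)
Your proposal matches the paper's strategy for the slopes $r<2g-1$, but it has a genuine gap for $r>2g-1$, which is half of the theorem. The paper, like you, starts from the Stein fillable $\xi_0$ supported by $(\Sigma_g^1,\phi)$ and realizes the binding $B$ as a Legendrian $L$ in $(\partial X_\phi,\xi_0)$; after careful handle moves it achieves $tb(L)=2g-1$, so the smooth coefficient $r$ becomes the contact coefficient $r'=r-2g+1$, and when $r'<0$ the Ding--Geiges algorithm converts the surgery into contact $(-1)$-surgeries, preserving Stein fillability and hence tightness. This is exactly your argument, and your heuristic for where $2g-1$ comes from is essentially right.

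The problem is your claim that \emph{every} rational $r\neq 2g-1$ can be handled ``using only contact $(-1)$-surgeries,'' possibly after stabilizing. Stabilization only \emph{lowers} $tb$, which makes the contact-framed coefficient $r-tb(L)$ larger, i.e.\ more positive --- the wrong direction. To get a negative contact coefficient for $r>2g-1$ you would need a Legendrian representative of the binding with $tb>r>2g-1$, and this is impossible: the binding is a fibered knot of genus $g$ in a contact manifold with nonvanishing contact invariant, so its self-linking/Thurston--Bennequin data are capped by the Bennequin-type bound at $2g-1$ (cf.\ Etnyre--Van Horn-Morris). Hence for $r>2g-1$ the conversion necessarily involves contact $(+1)$-surgeries, which preserve neither Stein fillability nor tightness in general, and your argument stops there. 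The paper closes this case with a different tool: since $\xi_0$ is Stein fillable it has nonzero Ozsv\'ath--Szab\'o contact invariant, the binding is a fibered \emph{transverse} knot, and Conway's theorem on transverse surgery says that $r$-surgery on such a knot preserves the nonvanishing of the contact class precisely when $r>2g-1$; nonvanishing invariant then gives tightness. You would need to import this (or some comparable result on positive contact surgeries, e.g.\ via Heegaard Floer invariants) to complete the proof; the Legendrian-surgery machinery alone cannot do it.
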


As a consequence of the theorems we have:
\begin{corollary} \label{cor:Main_Theorem}
Suppose $g\geq 2$, $m,n \in \Z^+$, $r \in \Q$ and $\phi$ as indicated in $(\ref{eqn:homeomorphism})$. Then
$M_\phi(r)$ is a hyperbolic manifold admitting a tight contact structure for all\, $r\neq 2g-1$ and all but finitely many $m \in \Z^+$. \qed
\end{corollary}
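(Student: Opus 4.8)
The corollary follows at once from the two theorems: fix $g\geq 2$ and $m,n\in\Z^+$; Theorem~\ref{thm:Main_Theorem1} excludes only finitely many pairs $(m,r)$ from hyperbolicity of $M_\phi(r)$, while Theorem~\ref{thm:Main_Theorem2} supplies a tight contact structure on $M_\phi(r)$ for \emph{every} $r\neq 2g-1$, so for all $r\neq 2g-1$ and all but finitely many $m$ both conclusions hold simultaneously. The real content is therefore Theorem~\ref{thm:Main_Theorem2}, and I outline a plan for it.

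The plan is to trade the closed surface bundle for an open book. Since $a_1,\dots,a_{2g+1}$ form a chain whose complement in $\Sigma_g$ is a disk, we may represent $\phi$ by a homeomorphism that is the identity on a smaller disk $D'$ disjoint from every $a_i$; take the section $K$ of $M_\phi$ through a point $p\in D'$. Then $M_\phi\setminus\nu(K)$ is the mapping torus of $\Sigma_{g,1}=\Sigma_g\setminus D'$ with monodromy $\psi=t_{a_1}^m t_{a_2}\cdots t_{a_{2g}} t_{a_{2g+1}}^n\in MCG(\Sigma_{g,1})$, and because $m,n\geq1$ this $\psi$ is a \emph{product of positive Dehn twists}. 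The manifolds $M_\phi(r)$ are precisely the Dehn fillings of this once-punctured surface bundle, and one of them, say $M_\phi(r_0)$, is the total space of the open book $(\Sigma_{g,1},\psi)$; the Lefschetz fibration over the disk with regular fiber $\Sigma_{g,1}$ and vanishing cycles read off from $\psi$ then exhibits $(M_\phi(r_0),\xi_0)$ as a Stein fillable --- hence tight --- contact manifold. Composing $\psi$ with positive powers $t_\partial^{k}$ of the boundary Dehn twist only lengthens the positive word and corresponds to integral surgeries on the binding relative to the page framing, so a whole half-line of coefficients $r$ is handled in exactly the same way.

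For the remaining coefficients I would Legendrian realize the binding $B\subset(M_\phi(r_0),\xi_0)$, compute the defect between its page framing and its contact framing, and then invoke that contact $r$-surgery with $r<0$ yields a Stein fillable manifold (Ding--Geiges), stabilizing $B$ to slide the defect where needed; translated through the dictionary above, this covers $M_\phi(r)$ for every $r$ on one side of a threshold. A symmetric version of the argument --- stabilizing $B$ the other way and using the chain relation $t_\partial=(t_{a_1}\cdots t_{a_{2g+1}})^{2g+2}$ to keep monodromies positive where necessary --- should cover the coefficients on the other side, with $r=2g-1=-\chi(\Sigma_{g,1})$ emerging as the lone exception, being exactly the slope that corresponds to the forbidden ``contact $0$-surgery'' on $B$. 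Tightness follows in every case from ``Stein fillable implies tight.'' I expect the framing and surgery-coefficient bookkeeping of this last step --- verifying the dictionary, pinning the threshold to $2g-1$, and closing the gap on the far side --- to be the main obstacle; the rest is the standard circle of ideas linking surface bundles, open books, Lefschetz fibrations, and Stein fillings.
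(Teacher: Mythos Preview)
Your deduction of the corollary from Theorems~\ref{thm:Main_Theorem1} and~\ref{thm:Main_Theorem2} is exactly the paper's (the \qed after the statement is the entire proof there), and for $r<2g-1$ your plan for Theorem~\ref{thm:Main_Theorem2} also matches the paper: pass to the open book $(\Sigma_{g}^1,\phi)$ with positive monodromy, build the Stein filling $X_\phi$ as a Lefschetz fibration, Legendrian-realize the binding $B$ with $tb(L)=2g-1$ (the paper does this by explicit handle moves), and observe that the contact surgery coefficient $r'=r-(2g-1)$ is negative, so the result is again Stein fillable.

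The gap is your treatment of $r>2g-1$. Stabilizing ``the other way'' and invoking the chain relation does not convert a positive contact surgery on the binding into a sequence of Legendrian $(-1)$-surgeries, and there is no reason to expect $M_\phi(r)$ to be Stein fillable in this range; your claim that ``tightness follows in every case from Stein fillable implies tight'' is unsupported here. The paper does \emph{not} argue via fillability for $r'>0$: it changes tools. The binding $B$ is a fibered \emph{transverse} knot in the Stein-fillable boundary $(\partial X_\phi,\xi_0)$, so $\xi_0$ has nonvanishing Heegaard Floer contact invariant, and Conway's theorem on transverse surgery then guarantees that topological $r$-surgery on $B$ with $r>2g-1$ yields a contact structure with nonzero contact invariant --- hence tight, but not by way of a Stein filling. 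This Heegaard Floer input is the missing idea on the positive side of the threshold.
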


The proof of Theorem \ref{thm:Main_Theorem1} and Theorem \ref{thm:Main_Theorem2} will be given in Section \ref{sec:Proof of Theorem1} and Section \ref{sec:Proof of Theorem2}.

%==================================================================
%\medskip \noindent {\em Acknowledgments.\/} The second author would like to
%thank T\"{U}B\.{I}TAK, the Scientific and Technological Research Council of Turkey, for their supports.
%==================================================================

%-----------------------------------------------------------------------------------------------
%-----------------------------------------------------------------------------------------------
%-----------------------------------------------------------------------------------------------

\section{Proof of Theorem 1.1} \label{sec:Proof of Theorem1}

In order to prove the theorem, we'll focus on pseudo-Anosov homeomorphisms and construct infinitely many hyperbolic $3-$manifolds via pseudo-Anosov monodromies. A hyperbolic $3-$manifold is a $3-$manifold which admits a complete finite-volume hyperbolic structure. Thurston \cite{T} demonstrated that an orientable surface bundle over circle whose fiber is a compact surface of negative Euler characteristic is hyperbolic if and only if the monodromy of the surface bundle is a pseudo-Anosov homeomorphism. Another deep result of Thurston is hyperbolic Dehn surgery theorem which states that a hyperbolic $3-$manifold remains hyperbolic after Dehn filling along a link for all slopes except finitely many of them (For details see \cite{T2}). In order to apply these results, we need a lemma where we construct infinitely many pseudo-Anosov diffeomorphisms as products of certain Dehn twists:

\begin{lemma} \label{lem:Pseudo-Anosov}
Let $\phi$ be the class in $MCG(\Sigma_g)$ as described in (\ref{eqn:homeomorphism}) above. Then $\phi$ is pseudo-Anosov for any integer $n$ and for all but at most $7$ consecutive values of $m$.
\end{lemma}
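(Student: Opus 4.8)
The plan is to use the standard Thurston–Penner–Fathi machinery for producing pseudo-Anosov maps from products of Dehn twists along a filling collection of curves. First I would verify that the collection $\{a_1,\dots,a_{2g+1}\}$ in Figure~\ref{fig:Fig_1} is a \emph{filling} system on $\Sigma_g$: the complement $\Sigma_g\setminus\bigcup a_i$ is a union of disks. This is the familiar chain of curves used in the Humphries-type generating sets, so it does fill. Penner's criterion then says that any word in the $t_{a_i}^{\pm 1}$ in which every generator appears with exponents of a consistent sign (all the curves being twisted one way forming one subset, the other way the complementary subset) and in which each curve is actually used, represents a pseudo-Anosov class. Since in $(\ref{eqn:homeomorphism})$ all the twists are positive, Penner's construction applies directly and shows $\phi = t_{a_1}^m t_{a_2}\cdots t_{a_{2g}} t_{a_{2g+1}}^n$ is pseudo-Anosov — \emph{provided} every exponent is at least one, i.e. $m\ge 1$ and $n\ge 1$. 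So Penner handles all $m\ge 1$ and all $n\ge 1$ in one stroke, which is the bulk of the statement.

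The remaining issue is the values $m\le 0$ (and, symmetrically, one might worry about $n\le 0$, but the lemma as stated quantifies over all integers $n$, so I must be careful here — presumably the curves $a_1,\dots,a_{2g}$ already fill without $a_{2g+1}$, so the sign of the $t_{a_{2g+1}}$ exponent is irrelevant and $n$ is genuinely unconstrained; I would check exactly this, that $\{a_1,\dots,a_{2g}\}$ is itself filling, which it is for the standard chain). For $m\le 0$ one cannot invoke Penner directly because the signs are mixed. Here I would use a continuity/algebraic argument: the mapping classes $\phi_m := t_{a_1}^m t_{a_2}\cdots t_{a_{2g+1}}^n$ for varying $m$ differ by composition with powers of the single reducible map $t_{a_1}$, and the set of non-pseudo-Anosov classes in such a one-parameter family $\{\psi\, t_{a_1}^m : m\in\Z\}$ is highly constrained. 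The cleanest tool is a result in the spirit of Fathi (or the train-track arguments of Thurston): if $\psi$ is any mapping class and $a$ a curve that together with $\mathrm{supp}$-data fills enough, then $\psi t_a^m$ is pseudo-Anosov for all but a bounded interval of $m$, and one can often pin the bound down explicitly. The target bound of "at most $7$ consecutive values of $m$" suggests that the intended argument produces pseudo-Anosov behavior for $m\ge 1$ (Penner) and for $m\le -k$ for some explicit $k$ (a second application of Penner after rewriting, or a train-track estimate), leaving the window $-k<m<1$ of length at most $7$.

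Concretely, for the negative range I would conjugate and use the lantern- or chain-relation-free fact that $t_{a_1}^{-1} = $ a positive word is false, so instead I would pass to the reverse word or use the symmetry of the curve configuration: there should be an orientation-reversing or index-shifting symmetry of Figure~\ref{fig:Fig_1} under which the role of "$m$ negative" becomes "$m$ positive" for a companion configuration, again filling, again Penner-eligible. Alternatively — and this is probably what the paper does — invoke a Bestvina–Handel / train-track algorithm packaged as a lemma: for a fixed filling subcollection, the map fails to be pseudo-Anosov for at most $\chi$-many consecutive twist parameters where $\chi$ is controlled by the number of complementary regions, and here that count is $7$. The main obstacle, then, is precisely this negative-$m$ range: Penner gives the positive side for free, but ruling out reducibility/periodicity for the finitely many exceptional $m$ requires either an explicit train-track carried by the configuration whose incidence matrix stays Perron–Frobenius, or a clean appeal to a known finiteness theorem; identifying the sharp constant $7$ is where the real work lies, and I would expect to get it by counting the complementary disks of $\bigcup_{i=1}^{2g+1} a_i$ and feeding that into the train-track Euler-characteristic bound.
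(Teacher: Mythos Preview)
Your plan rests on a misstatement of Penner's criterion. Penner requires the filling collection to be partitioned into two \emph{multicurves} $A$ and $B$ (curves within each family pairwise disjoint), and then asserts that any word in the $t_{a_i}$ and $t_{b_j}^{-1}$ using every curve is pseudo-Anosov. It does \emph{not} say that an arbitrary product of same-sign twists along a filling collection is pseudo-Anosov. In the chain $a_1,\dots,a_{2g+1}$ consecutive curves intersect, so they cannot all sit in a single multicurve; hence a product of all-positive twists along this chain is simply outside the scope of Penner's theorem. And indeed the conclusion you draw is false: for $n=0$ and $m=1$ the map $\phi=t_{a_1}\cdots t_{a_{2g}}$ is periodic (it satisfies the chain relation $(t_{a_1}\cdots t_{a_{2g}})^{4g+2}=1$ on the closed surface), so it is not pseudo-Anosov even though all exponents are positive. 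Your ``Penner handles all $m\ge 1$'' step therefore fails, and with it the whole architecture of splitting into $m\ge 1$ versus $m\le 0$.

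The paper's argument is much shorter and avoids this trap entirely. It applies Fathi's theorem (which you mention only as a vague fallback) as the sole tool: set $\gamma=a_1$ and $f=t_{a_1}t_{a_2}\cdots t_{a_{2g}}t_{a_{2g+1}}^n$, and compute directly that $f^i(a_1)=a_{i+1}$ for $i=1,\dots,2g-1$, using only the elementary identity $t_\alpha t_\beta(\alpha)=\beta$ for curves meeting once. Since $\{a_1,\dots,a_{2g}\}$ already cuts $\Sigma_g$ into a disk, the $f$-orbit of $a_1$ fills, and Fathi gives pseudo-Anosov for all but at most $7$ consecutive values of $m$. The constant $7$ is the constant in Fathi's theorem, not a disk count; and the computation never touches $t_{a_{2g+1}}^n$, which is why the result is uniform in $n\in\Z$.
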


Denote by $\iota(\alpha,\beta)$ \emph{geometric intersection number} of the curves $\alpha$ and $\beta$. We say a set of simple closed curves $\{\gamma_1,\gamma_2,\ldots,\gamma_k\}$ \emph{fills} $\Sigma_g$ if $\Sigma_{g}\setminus\{\gamma_1,\gamma_2,\ldots,\gamma_k\}$ is a disjoint union of topological disks. In order to prove Lemma \ref{lem:Pseudo-Anosov}, we use the following theorem of Fathi:

\begin{theorem} $($\cite{Fa}$)$ \label{thm:Pseudo-Anosov}
Let $f$ be the class in $MCG(\Sigma_g)$ and let $\gamma$ be a simple closed curve in $\Sigma_g$. If the orbit of $\gamma$ under $f$ fills $\Sigma_g$, then $t_\gamma^mf$ is a pseudo-Anosov class except for at most 7 consecutive values of $m$.
\end{theorem}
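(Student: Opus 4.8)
The statement to be proved is Fathi's theorem, so rather than invoke it I would reconstruct the argument from Thurston's theory of measured foliations. The plan is to let $t_\gamma^m f$ act on Thurston's sphere $\mathcal{PMF}(\Sigma_g)$ of projective measured foliations and to exhibit, for all but a bounded block of values of $m$, the source--sink (north--south) dynamics that characterizes pseudo-Anosov classes. By the Thurston--Nielsen classification a mapping class is pseudo-Anosov exactly when it is neither periodic nor reducible; equivalently, it acts on $\mathcal{PMF}$ with precisely two fixed points, one attracting and one repelling, each an arational (filling) foliation. Hence it suffices to produce this dynamical picture.

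The central tool is the effect of a Dehn twist on the geometric intersection pairing $i(\cdot,\cdot)$, the continuous homogeneous extension of $\iota$ to $\mathcal{MF}(\Sigma_g)$. For measured foliations $\mu,\nu$ one has the estimate
$$\bigl| i(t_\gamma^m \mu,\nu) - |m|\, i(\gamma,\mu)\, i(\gamma,\nu)\bigr| \le i(\mu,\nu),$$
so that $[t_\gamma^m \mu] \to [\gamma]$ in $\mathcal{PMF}$ as $|m|\to\infty$ whenever $i(\gamma,\mu) > 0$. Writing $h = t_\gamma^m f$, this gives $[h\mu]\to[\gamma]$ provided $i(\gamma, f\mu) > 0$, and dually $[h^{-1}\nu]\to[f^{-1}\gamma]$ when $i(\gamma,\nu)>0$. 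First I would use the hypothesis that the orbit $\{f^k\gamma\}$ fills $\Sigma_g$ to eliminate the exceptional locus: any $h$-invariant projective class must meet $\gamma$, since otherwise invariance would propagate disjointness from $\gamma$ along the entire orbit, contradicting filling. With the bad locus removed, the estimate upgrades to a genuine contraction---$h$ carries the complement of a shrinking neighborhood of $[f^{-1}\gamma]$ into a shrinking neighborhood of $[\gamma]$---so for $|m|$ large $h$ has a single attracting and a single repelling fixed point, both filling, and is therefore pseudo-Anosov.

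The delicate point, and the main obstacle, is sharpening ``for $|m|$ large'' to ``outside an interval of at most $7$ consecutive integers.'' Two things must be established: that the exceptional set $B=\{m: t_\gamma^m f \text{ is not pseudo-Anosov}\}$ is convex, i.e.\ a block of consecutive integers, and that its length is at most $7$. For convexity I would suppose $t_\gamma^m f$ is periodic or reducible, producing an invariant projective class $[\mu]$, and then track the invariance relation together with $i(\gamma,\mu)$ and $i(\gamma, f\mu)$; the twist estimate converts this into a near-linear condition in $m$, whose solution set is an interval of integers. The quantitative value $7$ should then fall out by feeding the additive error $i(\mu,\nu)$ from the displayed inequality back into that condition: the count of integers $m$ for which the almost-linear function $m\mapsto i(t_\gamma^m\mu,\nu)$ can fail to force filling dynamics is controlled by the ratio of the $O(1)$ error to the slope $i(\gamma,\mu)\,i(\gamma,\nu)\ge 1$, and a careful accounting of the constant yields $7$. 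I expect this constant-chasing, rather than the qualitative source--sink dynamics, to be the hardest and most technical part of the proof.
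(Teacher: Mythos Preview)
The paper does not prove this theorem at all: it is quoted as a known result of Fathi \cite{Fa} and immediately applied to establish Lemma~\ref{lem:Pseudo-Anosov}. So there is no ``paper's own proof'' to compare your proposal against; the intended comparison is vacuous.

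As for the proposal itself, your outline is broadly faithful to Fathi's actual argument in \cite{Fa}: one studies the action of $t_\gamma^m f$ on $\mathcal{PMF}(\Sigma_g)$, uses the twist inequality $\bigl| i(t_\gamma^m\mu,\nu) - |m|\,i(\gamma,\mu)\,i(\gamma,\nu)\bigr| \le i(\mu,\nu)$, and exploits the filling hypothesis to rule out invariant foliations disjoint from $\gamma$. However, two steps in your sketch are not yet proofs. First, your convexity claim---that the bad set $B$ is a block of consecutive integers---is asserted via a ``near-linear condition in $m$'' but you have not written down what that condition is or why its integer solution set is an interval; in Fathi's paper this step goes through a careful analysis of the possible reducing systems and periodic points, not a one-line linearity observation. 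Second, and more seriously, you concede that the constant $7$ is obtained by ``a careful accounting'' without performing it; the heuristic ``ratio of the $O(1)$ error to the slope $\ge 1$'' does not by itself produce any specific integer, let alone $7$. In Fathi's argument the bound $7$ arises from a concrete inequality comparing dilatations and intersection numbers, and reproducing it requires real work beyond what you have indicated. What you have written is a plausible strategy memo, not a proof; for the purposes of this paper, citing \cite{Fa} (as the authors do) is the appropriate move.
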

\begin{proof}[Proof of Lemma \ref{lem:Pseudo-Anosov}] Let $\gamma$ represents the curve $a_1$ and let $f$ be the product of Dehn twists $t_{a_1}t_{a_2}\cdots t_{a_{2g}}t_{a_{2g+1}}^n$. Then conclude that
$$f(\gamma)=t_{a_1}t_{a_2}(a_1)=a_2, \quad f^2(\gamma)=t_{a_1}t_{a_2}t_{a_3}(a_2)=a_3,$$
\noindent and inductively,
\begin{center}
$f^i(\gamma)=a_{i+1}$ for all $i \in {1,2,\ldots,2g-1}$.
\end{center}

\medskip
\noindent Since the complement $\Sigma_g \setminus \{a_1,\ldots,a_{2g}\}$ is a topological disk, we can say the orbit of $\gamma$ under $f$ fills $\Sigma_g$.  As a result of Theorem \ref{thm:Pseudo-Anosov}, $\phi$ is pseudo-Anosov except for at most $7$ consecutive $m$ values.
\end{proof}

Now we have a family of pseudo-Anosov monodromies. Using \cite{T} we can say that the surface bundles $M_\phi$ are all hyperbolic. By hyperbolic Dehn surgery theorem the surgered manifolds $M_\phi(r)$ are hyperbolic for all $m, n \in \Z$ and $r\in \Q$ except 7 values of $m$ and finitely many ``bad'' slopes $r$. This finishes the proof of Theorem \ref{thm:Main_Theorem1}.
\qed

%-----------------------------------------------------------------------------------------------
%-----------------------------------------------------------------------------------------------
%-----------------------------------------------------------------------------------------------

\begin{figure}[ht]
	%\begin{center}
		\includegraphics[scale=.6]{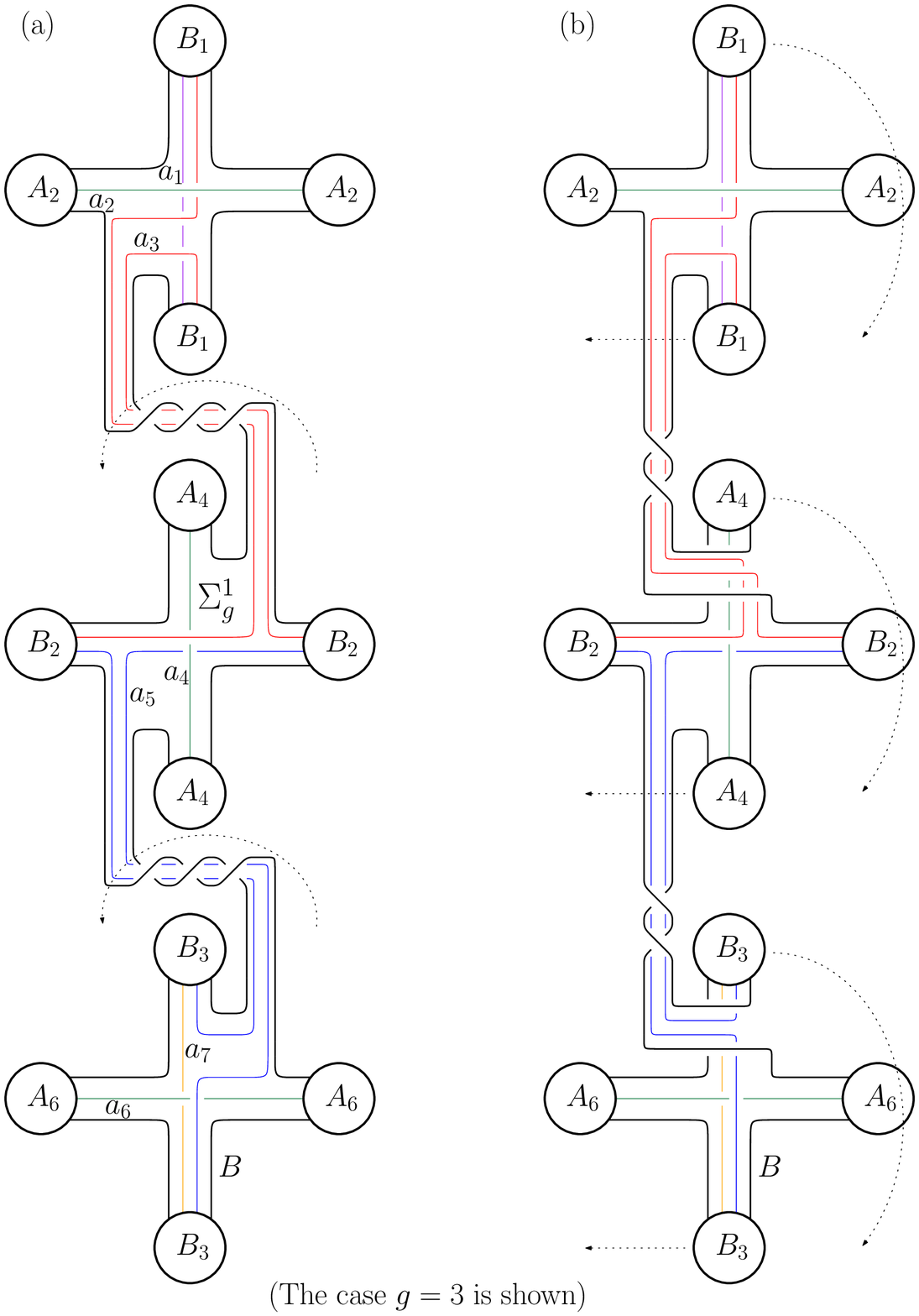}
		\caption{%(a) The Lefschetz fibration structure (with a regular fiber $\Sigma_g^1$ and the vanishing cycles $\gamma, a_2, ..., a_{2g+1}$) on the smooth manifold $X_\phi$, and the binding $B=\partial \Sigma_g^1$ of the induced open book $(\Sigma_g^1,\phi)$ on the boundary $\partial X_\phi$. There are $n$ copies for $a_{2g+1}$ and $m$ copies for $\gamma$ (not drawn for simplicity). All coefficients (except on $B$) are $-1$ with respect to the framing given by the page $\Sigma_g^1$. (No handle is attached along $B$.) (b) Another handle description of $X_\phi$ obtained by flipping the connecting bands over the feet of $1$-handles suggested by the dotted arrows in part (a).
}
		\label{fig:Four_Manifold_Yeni}
	%\end{center}
\end{figure}

\section{Proof of Theorem 1.2} \label{sec:Proof of Theorem2}

We will analyze the proof with respect to the parity of the genus $g$ of the fiber $\Sigma_g$. First assume $g\geq 3$ odd. Note that conjugation of the monodromy by any class of $MCG(\Sigma_g)$ does not change the mapping torus up to diffeomorphism. Since
$$t_{a_2}\cdots t_{a_{2g}}t_{a_{2g+1}}^nt_{a_1}^m=t_{a_1}^{-m} \phi \,t_{a_1}^m$$

\noindent we may replace $\phi$ in (\ref{eqn:homeomorphism}) with the mapping class $t_{a_2}\cdots t_{a_{2g}}t_{a_{2g+1}}^nt_{a_1}^m$. Also observe that $M_\phi(r)$ can be also obtained from a Dehn surgery on the binding of an open book decomposition whose page is $\Sigma_g^1$ (punctured $\Sigma_g$) and monodromy can be still assumed to be $\phi \in MCG(\Sigma_g^1)$. We will construct the required contact structure $\xi$ on $M_\phi(r)$ via Dehn surgery on the open book decomposition $(\Sigma_g^1,\phi)$ along its binding.

It is known (see \cite{AO}, \cite{Gi}) that the contact structure, say $\xi_0$, (before the surgery along binding) supported by $(\Sigma_g^1,\phi)$ is Stein fillable. More precisely, consider the handlebody diagram of the smooth $4-$manifold $X_\phi$ given in Figure \ref{fig:Four_Manifold_Yeni}-(a) (in the case of genus 3) with ``$2g$'' $1-$handles and ``$m+n+2g-1$'' $2-$handles. Note that Figure \ref{fig:Four_Manifold_Yeni}-(a) describes a Lefschetz fibration structure on $X_\phi$ with a regular fiber $\Sigma_g^1$ and the vanishing cycles $a_1, a_2, ..., a_{2g+1}$. There are $n$ copies for $a_{2g+1}$ and $m$ copies for $a_1$ (not drawn for simplicity). All coefficients (except on $B$) are $-1$ with respect to the framing given by the page $\Sigma_g^1$. We remark that no handle is attached along the binding of the induced open book $(\Sigma_g^1,\phi)$ on the boundary $\partial X_\phi$ which is realized as $B$ in the figure.

\begin{figure}[ht]
	%\begin{center}
		\includegraphics[scale=.6]{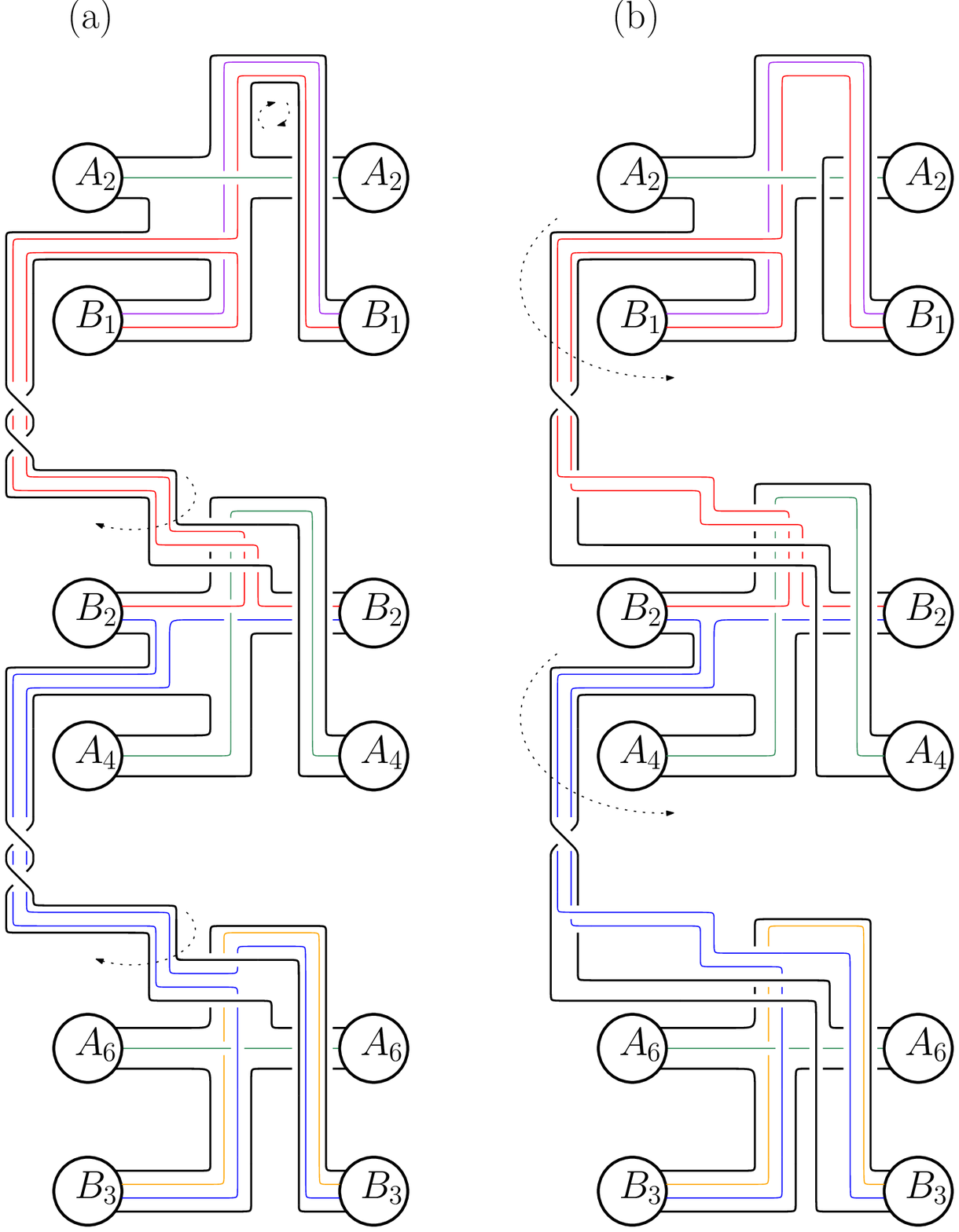}
		\caption{%(a) Another handle description of $X_\phi$ obtained by moving the feet of $1$-handles suggested by the dotted arrows in Figure \ref{fig:Four_Manifold_Yeni}-(b). (b) Another handle diagram for $X_\phi$ obtained by flipping the bands suggested by the dotted arrows in part (a).
}
		\label{fig:Four_Manifold_Yeni_2}
	%\end{center}
\end{figure}

Next starting from the topological description in Figure \ref{fig:Four_Manifold_Yeni}-(a) of $X_\phi$, we'll get a diagram describing a Stein structure on $X_\phi$ inducing $\xi_0$ as follows: First we flip the twisted bands over the $1-$handles as pointed out in Figure \ref{fig:Four_Manifold_Yeni}-(a) and get Figure \ref{fig:Four_Manifold_Yeni}-(b). Figure \ref{fig:Four_Manifold_Yeni_2}-(a) gives another handle description of $X_\phi$ obtained by moving the feet of $1$-handles as indicated by the dotted arrows in Figure \ref{fig:Four_Manifold_Yeni}-(b). Then flip the bands as shown in Figure \ref{fig:Four_Manifold_Yeni_2}-(a) to get rid of one more left half twist for each band (see Figure \ref{fig:Four_Manifold_Yeni_2}-(b)), and obtain Figure \ref{fig:Four_Manifold_Yeni_3}-(a) by flipping the connecting bands over the feet of $1$-handles suggested by the dotted arrows in Figure \ref{fig:Four_Manifold_Yeni_2}-(b).  Figure \ref{fig:Four_Manifold_Yeni_3}-(b) defines a Stein structure on $X_\phi$ obtained by putting the attaching circles in part (a) into Legendrian positions, where a Legendrian realization $L_0$ of $B$ in the tight contact boundary $\partial X_\phi$ is also provided. All coefficients (except on $L_0$) are $-1$ with respect to Thurston-Bennequin (contact) framing in $\partial X_\phi$ and no handle is attached along $L_0$. Note that $tb(L_0)=2$ (the case $g=3$ is shown). In the general case, $tb(L_0)=g-1$. Finally, we use the trick (``Move 6'') in Figure 20 of \cite{Go} to obtain a Legendrian representation $L$ of $B$ with $tb(L)=2g-1$ (see Figure \ref{fig:Stein_Manifold_g_3}). Note that   Figure \ref{fig:Stein_Manifold_g_3} describes the same Stein structure on $X_\phi$ as in Figure \ref{fig:Four_Manifold_Yeni_3}-(b).

\begin{figure}[ht]
	\begin{center}
		\includegraphics[scale=.6]{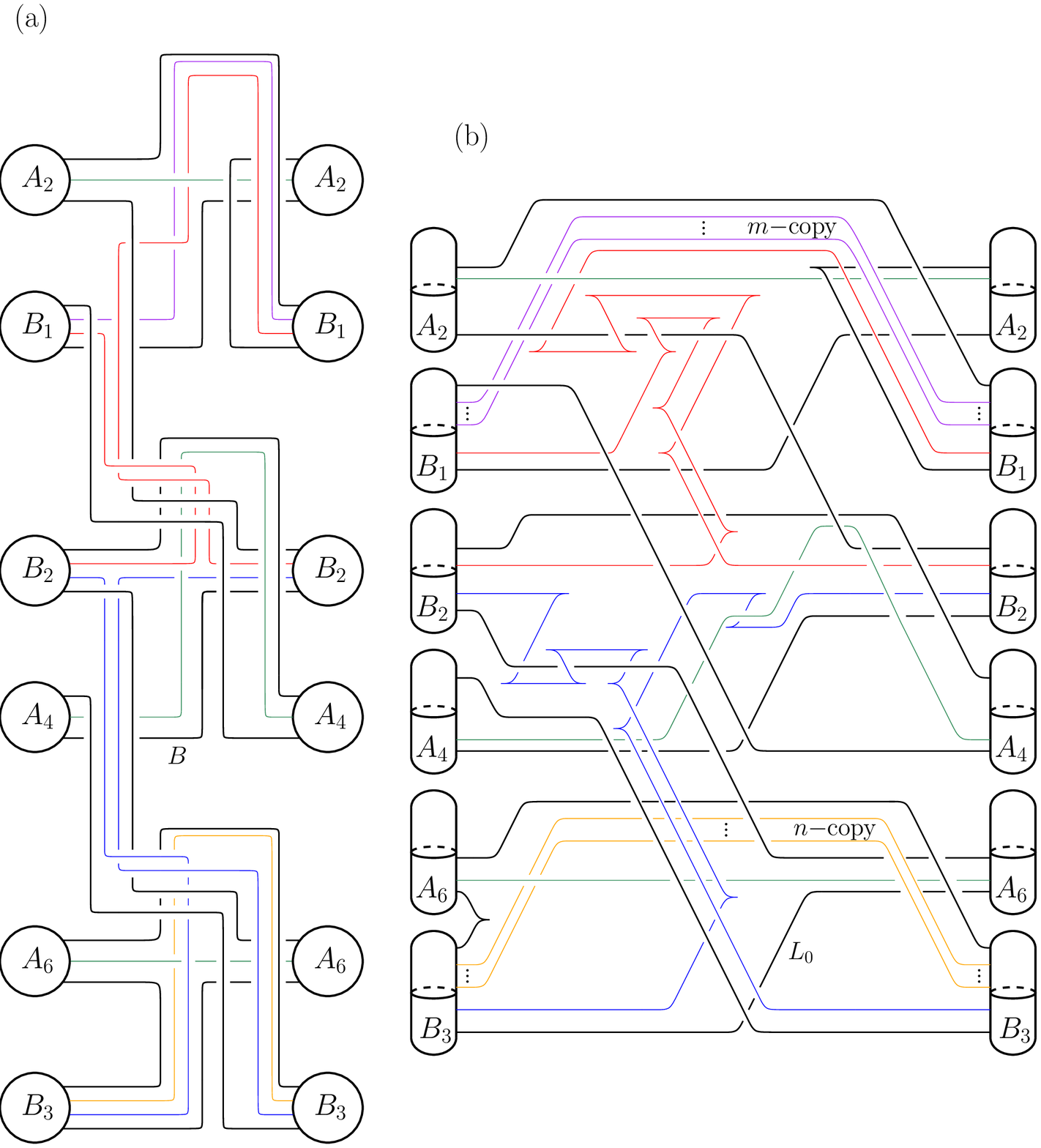}
		\caption{%(a) Another handle diagram for $X_\phi$ (obtained by flipping the connecting bands over the feet of $1$-handles suggested by the dotted arrows in Figure \ref{fig:Four_Manifold_Yeni_2}-(b), and a smooth embedding of the binding $B$ (smoothly isotopic to the original one). (b) The Stein structure on $X_\phi$ obtained by putting the attaching circles in part (a) into Legendrian positions, and Legendrian realization $L_0$ of $B$ in the tight contact boundary $\partial X_\phi$. All coefficients (except on $L_0$) are $-1$ with respect to Thurston-Bennequin (contact) framing in $\partial X_\phi$. No handle attached along $L_0$. Note that $tb(L_0)=2$ (the case $g=3$ is shown). In the general case, $tb(L_0)=g-1$.
}
		\label{fig:Four_Manifold_Yeni_3}
	\end{center}
\end{figure}

\newpage

\begin{figure}[ht]
	\begin{center}
		\includegraphics[scale=.6]{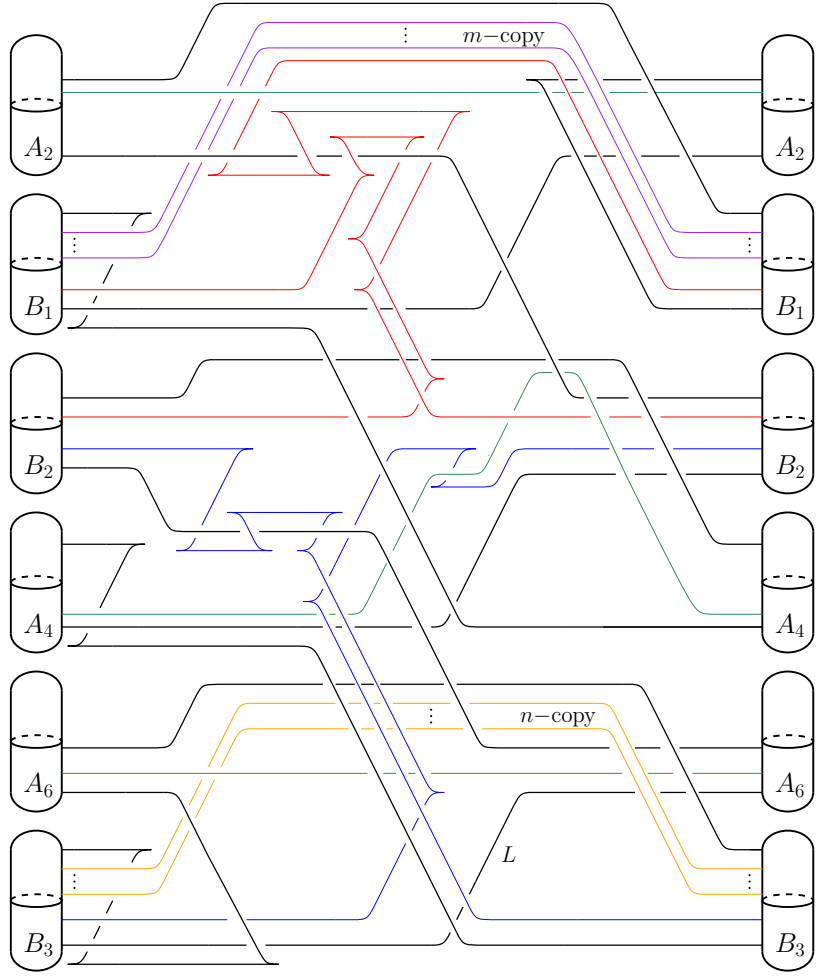}
		\caption{The same Stein structure on $X_\phi$ as in Figure \ref{fig:Four_Manifold_Yeni_3}-(b), and another Legendrian realization $L$ of the binding $B$ in the tight contact boundary $\partial X_\phi$. $L$ is obtained from $L_0$ by applying ``Move 6'' (smooth but non-Legendrian isotopy of $L_0$) $g$ times using the left foot of the corresponding 1-handles (when $g=3$, handles are $B_1, A_4, B_3$). All coefficients (except on $L$) are $-1$ with respect to Thurston-Bennequin (contact) framing in $\partial X_\phi$. No handle attached along $L$. Note that $tb(L)=5$ (the case $g=3$ is shown). In the general case, $tb(L)=2g-1$.}
		\label{fig:Stein_Manifold_g_3}
	\end{center}
\end{figure}

Now if $g\geq 2$ is even, we replace the monodromy $\phi$ with $t_{a_{2g+1}}^n t_{a_2}\cdots t_{a_{2g}}t_{a_1}^m$ since
$$t_{a_{2g+1}}^n t_{a_1}^{-m}\phi\,t_{a_{2g+1}}^{-n} t_{a_1}^m=t_{a_{2g+1}}^n t_{a_2} \cdots t_{a_{2g}} t_{a_1}^m.$$
Then starting from the handlebody diagram given in Figure \ref{fig:Stein_Manifold_g_4}-(a) (where the case $g=4$ is shown) and following the moves
as in the case of odd genus, one can get Figure \ref{fig:Stein_Manifold_g_4}-(b) describing a Stein structure realizing a Legendrian representation $L$ with $tb(L)=2g-1$ as in Figure \ref{fig:Stein_Manifold_g_3}. One should note that we need to consider different monodromies (but still giving the same mapping torus) depending on the parity of $g$ to make the contact and the page framing on any attaching circle coincide.\\

\begin{figure}[ht]
	\begin{center}
		\includegraphics[scale=.6]{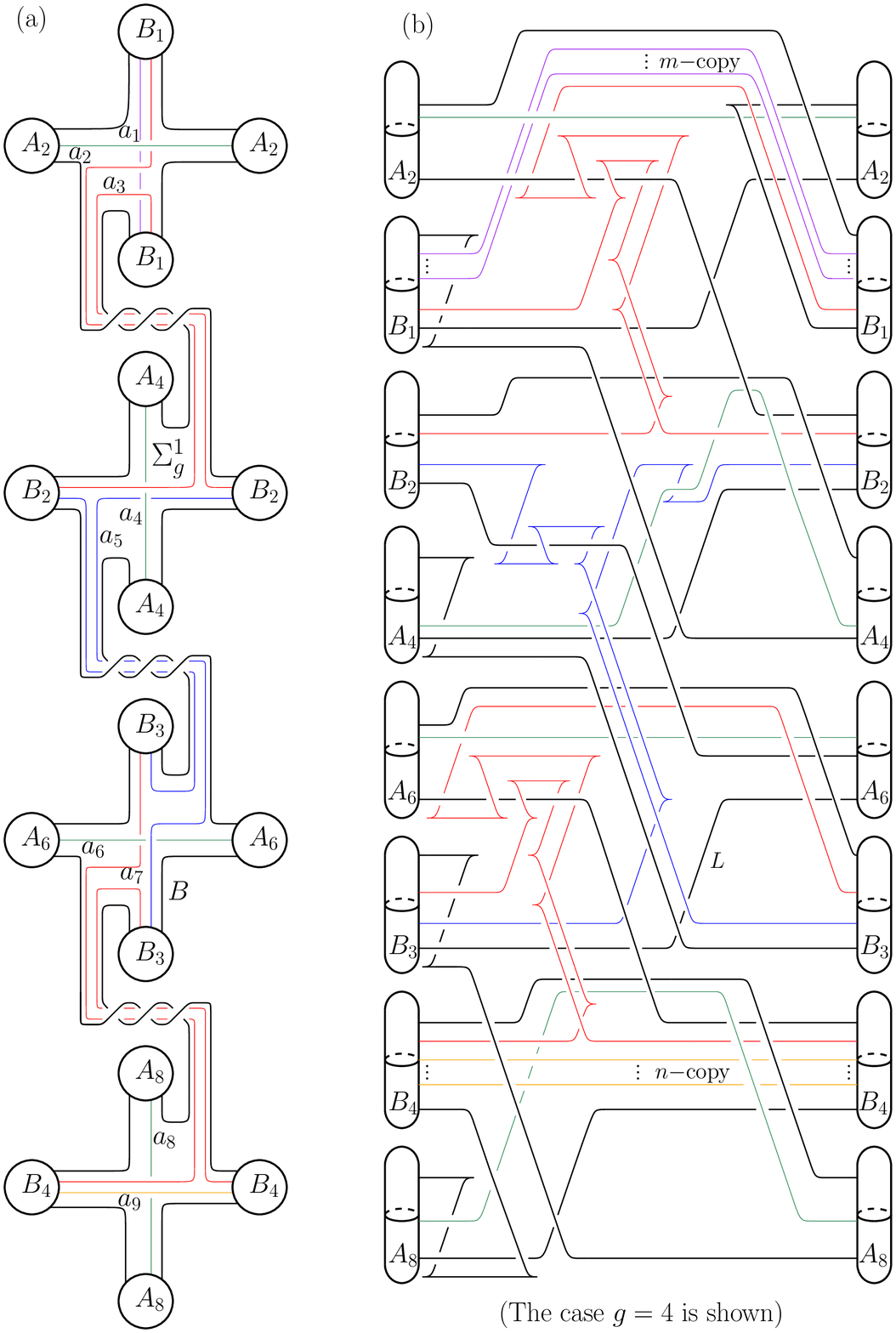}
		\caption{}
        \label{fig:Stein_Manifold_g_4}
	\end{center}
\end{figure}

Now (in any case of $g$) we first (Legendrian) slide (Stein) $2-$handle corresponding $a_3$ over the ones represented by the curves $a_1, a_5, a_7, ..., a_{2g+1}$, and then cancel the $2-$handles represented by $a_5, a_7,...,a_{2g-1}$ with the corresponding $1-$handles. Second, we (Legendrian) slide $2-$handles represented by the curves $a_1$ and $a_{2g+1}$ over a fixed one (chosen from each family in Figure \ref{fig:Stein_Manifold_g_3} / Figure \ref{fig:Stein_Manifold_g_4}-(b)), and then cancel $1-$handles $B_1$ and $B_{g}$ with the chosen $2-$handles corresponding $a_1$ and $a_{2g+1}$ respectively. Also we cancel each $1-$handle $A_i$ with the $2-$handle corresponding the curve $a_i$ for each $i$ even. As a result, we obtain another (but equivalent) Stein structure on $X_\phi$ which can be also considered as the contact surgery diagram for $\xi_0$ on $\partial X_\phi$. Finally, we set $r'=r-2g+1$ and perform $r'-$ contact surgery along $L \subset (\partial X_\phi, \xi_0)$ to get a contact structure $\xi$ on $M_\phi(r)$ whose diagram is given in Figure \ref{fig:Contact_Structure_g_3} (where we use continued fractions).
%Note that using continued fractions Figure \ref{fig:Contact_Structure} can be also interpreted as Figure \ref{fig:Contact_Structure_intro}. (This verifies the last claim of the theorem.) \\
\begin{figure}[ht]
	\begin{center}
		\includegraphics[scale=.56]{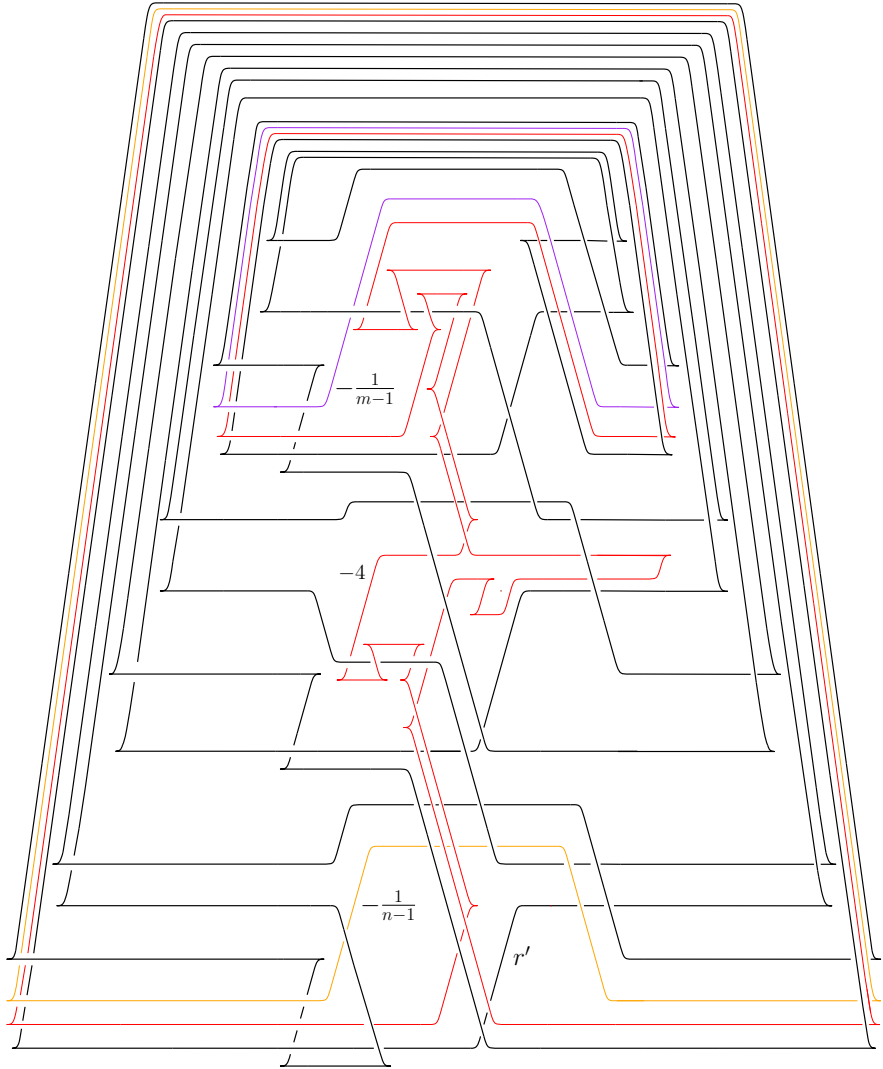}
		\caption{The contact $3-$manifold $(M_\phi(r), \xi)$. (The case $g=3$ is shown.)}
		\label{fig:Contact_Structure_g_3}
	\end{center}
\end{figure}

First suppose $r'=r-2g+1<0$. We know any contact surgery with negative contact framing can be converted to a sequence of contact $(-1)-$surgeries and $(-1)-$surgeries preserve Stein fillability (\cite{DGS}, \cite{Eliashberg}, \cite{Gi}). Thus $(M_\phi(r), \xi)$  is Stein fillable (hence tight).\\

Now let $r'=r-2g+1>0$. By Thurston-Winkelnkemper construction (\cite{TW}), it is known that the binding $B$ is transverse to the contact structure supported by the open book decomposition. Also since $\partial X_\phi$ is Stein fillable, $\xi_0$ has nonzero contact invariant \cite{OSS}. As a result of Conway's work (see \cite{Co}, Theorem 1.6) if $K$ is a fibered transverse knot in a contact $3-$manifold $(M,\eta)$ where $\eta$ has nonvanishing contact class, then r-surgery along $K$ preserves the non-vanishing of the contact class if $r>2g-1$ where $g$ is the genus of $K$. Hence we conclude that $(M_\phi(r), \xi)$ has nonzero contact invariant (hence tight) through Conway's result. This finishes the proof of Theorem \ref{thm:Main_Theorem2}.
\qed

\medskip \noindent {\em Acknowledgments.\/} The authors would like to
thank James Conway and Mustafa Korkmaz for their invaluable comments.

%-----------------------------------------------------------------------------------------------
%-----------------------------------------------------------------------------------------------
%-----------------------------------------------------------------------------------------------

%\addcontentsline{toc}{chapter}{\textsc{References}}

\addcontentsline{TOC}{chapter}{References}

\end{document}